\newcommand{\bs}[1]{\boldsymbol{#1}}
\newcommand{\bmu}{\bs{\mu}}
\newcommand{\calW}{\mathcal{W}}
\newcommand{\calR}{\mathcal{R}}
\newcommand{\calN}{\mathcal{N}}
\newcommand{\bbR}{\mathbb{R}}
\newcommand{\calD}{\mathcal{D}}
\newcommand{\N}{{\cal N}}
\DeclareMathOperator*{\argmax}{arg\,max}
\newtheorem{theorem}{Theorem}
\newtheorem{corollary}{Corollary}
\newtheorem{remark}{Remark}[section]
\newtheorem{lemma}{Lemma}[section]
\title{A reduced basis warm-start iterative solver for the parameterized linear systems}
\author{Shijin Hou\thanks{School of Mathematical Sciences, University of Science and Technology of China, Hefei, Anhui
230026, People's Republic of China. ({\tt houshiji@mail.ustc.edu.cn}).}
\and Yanlai Chen\thanks{Department of Mathematics, University of Massachusetts Dartmouth, 285 Old Westport Road, North Dartmouth, MA 02747, USA. ({\tt yanlai.chen@umassd.edu}). The work of this author was partially supported by National Science Foundation grant DMS-2208277, by Air Force Office of Scientific Research grant FA9550-23-1-0037, and by the UMass Dartmouth Marine and UnderSea Technology (MUST) Research Program made possible via an Office of Naval Research grant N00014-22-1-2012.}
\and Yinhua Xia\thanks{School of Mathematical Sciences, University of Science and Technology of China, Hefei, Anhui
	230026, People's Republic of China. ({\tt yhxia@ustc.edu.cn}). The work of this author was partially supported by National Key R\&D Program of China No. 2022YFA1005202/ 2022YFA1005200, and by National Natural Science Foundation of China grant 12271498.}}
\date{}
\begin{document}

\maketitle

\begin{abstract}
This paper proposes and tests the first-ever reduced basis warm-start iterative method for the parametrized linear systems, exemplified by those discretizing the parametric partial differential equations. Traditional iterative methods are usually used to obtain the high-fidelity solutions of these linear systems. However, they typically come with a significant computational cost which becomes challenging if not entirely untenable when the parametrized systems need to be solved a large number of times (e.g. corresponding to different parameter values or time steps). Classical techniques for mitigating this cost mainly include acceleration approaches such as preconditioning. This paper advocates for the generation of an initial prediction with controllable fidelity as an alternative approach to achieve the same goal. The proposed reduced basis warm-start iterative method leverages the mathematically rigorous and efficient reduced basis method to generate a high-quality initial guess thereby decreasing the number of iterative steps. Via comparison with the iterative method initialized with a zero solution and the RBM preconditioned and initialized iterative method tested on two 3D steady-state diffusion equations, we establish the efficacy of the proposed reduced basis warm-start approach.
\end{abstract}

\section{Introduction}
In this work, we consider the parametrized linear systems that take the general form of
\begin{equation}\label{discrete equation}
A_h(\bmu)u_h(\bmu) = f_h(\bmu),
\end{equation}
where $A_h(\bmu)$ denotes a parameter-dependent matrix of dimension $\N\times \N$, $u_n(\bmu),\ f_h(\bmu)\in 
\bbR^{\N}$ are $\N$-dimensional vectors, and $\bmu\in \mathcal{D}\subset\mathbb{R}^{p}$ is a $p$-dimensional parameter vector. They often stem from the discretization of parametric partial differential equations (pPDEs) whose real-time solution is widely in demand for many applications such as optimization, uncertainty quantification, and inverse problems. 
These systems can usually be solved with high precision by various iterative methods \cite{kelley1995iterative}, especially when the system is large. Several classical iterative methods, such as Jacobi, Richardson, and Gauss-Seidel methods, etc, are the simplest options. They are not only used as standalone solvers but also as preconditioners for accelerating other methods. Similarly, multigrid methods (MG) have been widely developed as both iterative methods \cite{brandt1977multi,hackbusch2013multi} and preconditioners \cite{tatebe1993multigrid}. There is another class of mainstream methods, Krylov subspace methods, which includes CG \cite{hestenes1952methods}, BiCGSTAB \cite{van1992bi}, GMRES \cite{saad1986gmres}, etc. 

However, all these high-fidelity iterative methods depend on the full-order model (FOM) of large degrees of freedom (DoFs). Some of these systems need to be solved toward machine precision (e.g. long-time simulations in astrophysics) which means that it will take a large number of iterative steps. Both factors contribute to an extremely time-consuming process. What exacerbates the situation is that repeatedly solving such problems for different parameter instances is often necessary. It is thus imperative to design efficient and reliable solvers for such problems that converge to machine precision.

In recent decades, the reduced basis method (RBM), as a class of reduced order modeling techniques, has been developed and widely used to obtain the fast solution of pPDEs \cite{Quarteroni2015,HesthavenRozzaStammBook,Haasdonk2017Review}. The RBM achieves high efficiency via an offline-online decomposition strategy and a mathematically rigorous procedure to build a surrogate solution space. In the offline phase, a reduced basis (RB) space, $W_N$, of dimension $N\ll\N$, the number of FOM DoFs, is successfully built by the greedy algorithm \cite{Rozza2008Reduced, Binev2011Convergence} 
or the proper orthogonal decomposition (POD) \cite{holmes_lumley_berkooz_1996, LIANG2002527}. With this surrogate approximate space, a reduced-order model (ROM) can be derived that enforces the PDE at the reduced level making the ROM physics-informed as opposed to purely data-driven. Subsequently, for different values of parameters, we only need to solve the ROM with a much lower computational cost in the online phase.

In this work, we propose and test the first-ever Reduced Basis Warm-Start (RBWS) iterative method leveraging the mature RBM framework to address the cost challenge of using the traditional iterative methods to repeatedly solve parametrized linear systems. The specific RBM we employ is a highly efficient variant, the so-called L1-based reduced over-collocation (L1ROC) method \cite{Chen2021L1Based}. After a learning stage with a bare-minimum overhead cost thanks to a cost-free L1-norm based error indicator \cite{chen2019robust}, it is capable of providing a highly accurate initial prediction for the iterative methods. In essence, we are developing a data-driven warm-start approach based on the state-of-the-art physics-informed model order reduction strategies for the traditional iterative solvers for linear systems.
Furthermore, we study the acceleration functionality of the RBM as a preconditioner. Via a formal analysis, we show that the approximation accuracy of the RB space of a fixed dimension, due to the need of preserving the computational efficiency, will deteriorate as the iteration goes on thus having a gradually more limited acceleration effect. 
In the numerical experiments, we implement the multigrid preconditioned conjugate gradient (MGCG) method \cite{tatebe1993multigrid} and the multispace reduced basis preconditioned conjugate gradient (MSRBCG) method \cite{Dal2018MultiSpaceReducedBasis} initialized with the RB initial value. The numerical results demonstrate that the MGCG method with our proposed warm-start approach has the best performance on both convergence and efficiency.

We remark that this is by far not the first attempt to hybridize a surrogate model with a full order iterative solver. There has been a class of hybrid methods, in which the data-driven models have been employed to improve the traditional iterative methods for saving computational effort or accelerating convergence. In \cite{zhou2023neural}, the authors proposed a neural network warm-start approach for solving the high-frequency inverse acoustic obstacle problem. A combination of deep feedforward neural networks and convolutional autoencoders is used to establish an approximate mapping from the parameter space to the solution space that serves as a means to obtain very accurate initial predictions in \cite{nikolopoulosa2022ai}. In \cite{kadeethum2022enhancing}, the non-intrusive reduced order model was used to improve the computational efficiency of the high-fidelity nonlinear solver. In \cite{Dal2018MultiSpaceReducedBasis,Dal2018Stokesequations,nguyen2018reduced,nikolopoulosa2022ai}, the authors proposed a multi-space reduced basis preconditioner by combining an iteration-dependent RB component that is derived from an RB solver and a fine preconditioner. Different preconditioned iterative methods such as the Richardson method and the Krylov subspace methods with this precondition have been studied. However, it's worth pointing out that these methods were not tested till machine accuracy which is the regime that a purely data-driven approach (such as the non-physics-informed neural network) or the RBM preconditioning may encounter challenges. In contrast, our proposed RBWS method does not face these challenges, and we formally analyze the limitations of the RBM preconditioning techniques.

The structure of this paper is as follows. Firstly, two iterative methods are reviewed in Section \ref{iterative solvers}. In Section \ref{RBM-initialized iterative solvers}, we present the new RBWS method and analyze the limitations of the RBM preconditioning techniques. The numerical results are shown in Section \ref{sec:numerical result} to demonstrate the convergence and the efficiency of the proposed method. Finally, concluding remarks are drawn in Section \ref{sec:Conclusion}.

\section{Background}
\label{iterative solvers}
This section is devoted to the review of two efficient preconditioned iterative methods for solving the linear system \eqref{discrete equation}, namely, the multigrid preconditioned conjugate gradient (MGCG) method \cite{tatebe1993multigrid} and the multispace reduced basis preconditioned conjugate gradient (MSRBCG) method \cite{Dal2018MultiSpaceReducedBasis}. To start, we briefly outline the conjugate gradient method and its preconditioned version since both MGCG and MSRBCG are specific cases of the preconditioned conjugate gradient (PCG) method.

\subsection{Preconditioned conjugate gradient method}
The conjugate gradient (CG) method \cite{hestenes1952methods} was designed for solving the symmetric positive-defined linear systems. The idea is to solve the equivalent optimization problem aiming to minimize the following quadratic function
\[
Q(u_h(\bmu)) = \frac{1}{2}u_h^{T}(\bmu)A_h(\bmu)u_h(\bmu)-f_h^T(\bmu)u_h(\bmu).
\]
The CG algorithm recursively solves for the $k^{\rm th}$ iteration  $u_h^{(k)}(\bmu)$ in the $k^{\rm th}$ Krylov subspaces $\mathcal{K}_k$,
\[
u_h^{(k)}(\bmu) = \arg\min_{u_h(\bmu)\in \mathcal{K}_k}Q(u_h(\bmu)).
\]
The PCG method is an enhanced CG method through preconditioning. Applying a linear transformation to the original system \eqref{discrete equation} with the matrix $T(\bmu)$, called the preconditioner, results in the following system
\begin{equation}\label{preconditioned discrete equation} 
T^{-1}(\bmu)A_h(\bmu)u_h(\bmu) = T^{-1}(\bmu)f_h(\bmu),
\end{equation}
which, with an appropriately designed $T(\bmu)$, features a smaller condition number than that of  \eqref{discrete equation}. Invoking the CG method to solve the preconditioned system \eqref{preconditioned discrete equation} leads to the PCG method for \eqref{discrete equation} as presented in Algorithm \ref{alg:PCG}. We denote the application of the  $k^{\rm th}$ PCG iteration to obtain the $k^{\rm th}$ solution $u_h^{(k)}(\bmu)$ with respect to the $(k-1)^{\rm th}$ solution $u_h^{(k-1)}(\bmu)$ by
\begin{equation}
u_h^{(k)}(\bmu) = {\rm PCG}(A_h(\bmu), f_h(\bmu), u_h^{(k-1)}(\bmu); {\mathcal P}(\cdot)).
\label{eq:pcg}
\end{equation}
To describe a generic framework including the MGCG and MSRBCG algorithms presented below, we denote the preconditioner by ${\mathcal P}(\cdot)$. The vanilla version above is nothing but PCG with ${\mathcal P}(\cdot)$ defined as matrix multiplication (linear solve) with $T^{-1}(\bmu)$.
\begin{algorithm}[htb]
\caption{PCG algorithm with a generic preconditioner ${\mathcal P}(\cdot)$}
\label{alg:PCG}
\begin{algorithmic}[1]
\STATE \textbf{Input:} $A_h(\bmu)\in \mathbb{R}^{\N\times\N}$, $f_h(\bmu)\in \mathbb{R}^{\N}$, preconditioner ${\mathcal P}(\cdot)$, the residual tolerance $\delta$, the maximum number of iterations $L_{\rm max}$ and an initial value $u_h^{(0)}(\bmu)$
\STATE Compute initial residual $r_h^{(0)}(\bmu) = f_h(\bmu)-A_h(\bmu)u_h^{(0)}(\bmu)$ and set $k=0$ 
\STATE $s_0(\bmu) = {\mathcal P}(r_h^{(0)}(\bmu))$ 
\STATE $p_0(\bmu) = s_0(\bmu)$
\WHILE{$\Vert r_h^{(k)}(\bmu)\Vert/\Vert f_h(\bmu)\Vert<\delta$ $\&$ $k< L_{\rm max}$}
\STATE $\alpha_k(\bmu) = \frac{(r_h^{(k)}(\bmu))^{T}s_k(\bmu)}{p_k^{T}(\bmu)A_h(\bmu)p_k(\bmu)}$
\STATE $u_h^{(k+1)}(\bmu) = u_h^{(k)}(\bmu)+\alpha_k(\bmu)p_k(\bmu)$
\STATE $r_h^{(k+1)}(\bmu) = r_h^{(k)}(\bmu)-\alpha_k(\bmu)A_h(\bmu)p_k(\bmu)$
\STATE $s_{k+1}(\bmu) = {\mathcal P}(r_h^{(k+1)}(\bmu))$ \label{PCG_precondition} 
\STATE $\beta_k(\bmu) = \frac{(r_h^{(k+1)}(\bmu))^{T}s_{k+1}(\bmu)}{(r_h^{(k)}(\bmu))^Ts_k(\bmu)}$
\STATE $p_{k+1}(\bmu)=s_{k+1}(\bmu)+\beta_k(\bmu)p_k(\bmu)$
\STATE $k = k+1$
\ENDWHILE
\end{algorithmic}
\end{algorithm}

\subsection{Multigrid preconditioner}\label{Sec:MGCG}
The multigrid (MG) method \cite{brandt1977multi,hackbusch2013multi} exploits the coarse-grid correction to overcome the limitation of the classical iterative approaches that tend to efficiently eliminate the high-frequency error, but not the low-frequency one. We consider an MG method with $J+1$ ($J\ge 1$) levels. Let $P_i(\bmu)$ denotes the prolongation operator from level $i$ to level $i+1$ with $0\le i \le J$. With the finest coefficient matrix $A_h^J(\bmu) = A_h(\bmu)$, then the coefficient matrix $A_h^i(\bmu)$ at $i^{\rm th}$ level grid can be computed by $A_h^i(\bmu) = P_i^T(\bmu)A_h^{i+1}(\bmu)P_i(\bmu)$ for $0\le i\le J-1$. Considering a general equation $Au=b$, we denote 
\begin{equation}\label{smoother}
u_2 = S(u_1,A, b,\nu)
\end{equation}
a smoothing step performing a smoother (Jacobi or Gauss-Seidel) $\nu$ times on $u_1$ to obtain $u_2$. For the $i^{\rm th}$-level equation $A_h^i(\bmu)u_h^i(\bmu) = b(\bmu)$, one iteration of the V-cycle MG method at level $i$ is presented in Algorithm \ref{alg:MG}, denoted by $u(\bmu) = {\rm MG}(b(\bmu),A_h^i(\bmu),i)$. Using the MG method as a preconditioner of the PCG method, i.e., replacing the Step \ref{PCG_precondition} of Algorithm \ref{alg:PCG} by $s_{k+1}(\bmu) = {\rm MG}(r_h^{(k+1)}(\bmu),A_h(\bmu),J)$, we obtain the MGCG method. 
\[
    {\rm MGCG}(A_h(\bmu), f_h(\bmu), u_h^{(k-1)}(\bmu)) \coloneqq {\rm PCG}(A_h(\bmu), f_h(\bmu), u_h^{(k-1)}(\bmu); {\rm MG}(\cdot,A_h(\bmu),J)).
\]

\begin{algorithm}[htb]
\caption{MG V-cycle at level $i$: $u(\bmu) = {\rm MG}(b(\bmu),A_h^i(\bmu),i)$}
\label{alg:MG}
\begin{algorithmic}[1]
\STATE \textbf{Input:} $\{A_h^i(\bmu)\}_{i=0}^J$, $\{p_i(\bmu)\}_{i=0}^{J-1}$, $b(\bmu)$, the number of pre-smoothing steps $\nu$, and level $i$
\STATE Implement the pre-smoothing process $\widetilde{u}(\bmu) = S(0,A_h^i(\bmu),b(\bmu),\nu)$
\STATE Compute residual $r_i(\bmu) = b(\bmu)-A_h^i(\bmu)\widetilde{u}(\bmu)$
\STATE Restrict residual $r_{i-1}(\bmu) = p_{i-1}^T(\bmu)r_{i}(\bmu)$ 
\STATE Correct the error on the coarse grid:\\
\textbf{if} $i=1$, $e_0(\bmu) = (A_h^0(\bmu))^{-1}r_{0}(\bmu)$\\
\textbf{else} $e_{i-1}(\bmu) = {\rm MG}(r_{i-1}(\bmu), A_h^{i-1}(\bmu),i-1)$
\STATE Prolongate coarse grid correction $\overline{u}(\bmu) = \widetilde{u}(\bmu)+p_{i-1}(\bmu)e_{i-1}(\bmu)$
\STATE Implement the post-smoothing process $u(\bmu) = S(\overline{u}(\bmu),A_h^i(\bmu),b(\bmu),\nu)$
\end{algorithmic}
\end{algorithm}

\subsection{Multispace reduced basis preconditioner}\label{Sec: MSRB precondioner}
Multispace reduced basis (MSRB) preconditioner \cite{Dal2018MultiSpaceReducedBasis} combines a fine grid preconditioner, such as Jacobi and Gauss-Seidel preconditioner with a coarse preconditioner induced from a reduced basis solver. We consider the equation $A_h(\bmu)u_h(\bmu) = b(\bmu)$. With $n_s$ snapshots $\{u_h(\bmu_i)\}_{i=1}^{n_s}$ (that are the high-fidelity solutions computed for the training parameters $\Xi_{\rm train}=\{\bmu_i\}_{i=1}^{n_s}$), the proper orthogonal decomposition (POD) is used to build the RB space of dimension $N$, represented by the column space of a matrix $W_N = [w_1, \dots w_N]\in \mathbb{R}^{\N\times N}$. Then we can obtain an RB approximation $u_h^{rb}(\bmu) = W_Nu_N(\bmu)$ by solving the following reduced linear system
 \begin{equation}\label{RB system} 
 A_N(\bmu)u_N(\bmu) = b_N(\bmu).
 \end{equation}
 where $A_N(\bmu)\in \mathbb{R}^{N\times N}$ and $b_N(\bmu)\in \mathbb{R}^N$ are obtained by projecting $A_h(\bmu)$ and $b(\bmu)$ to the RB space
 \[
 A_N(\bmu) = W_N^TA_h(\bmu)W_N,\ b_N(\bmu) = W_N^Tb(\bmu).
 \]
 Thus the RB solution can be expressed as
 \[
 u_h^{rb}(\bmu) = W_NA_N^{-1}(\bmu)b_N(\bmu).
 \]
 Let the application of the above POD-based RBM with the RB dimension $N$ be denoted by 
 \begin{equation}\label{POD-based RBM}
    u_h^{rb}(\bmu) = {\rm RBM_{POD}}(A_h(\bmu),b(\bmu),N).
\end{equation}
Combining a fine smoother denoted as \eqref{smoother} with the RB preconditioner \eqref{POD-based RBM}, the MSRB preconditioner is presented in Algorithm \ref{alg:MSRB}, denoted by $u(\bmu) = {\rm MSRB}(b(\bmu),A_h(\bmu),N)$.  Replacing the Step \ref{PCG_precondition} of Algorithm \ref{alg:PCG} by $s_{k+1}(\bmu) = {\rm MSRB}(r_h^{(k+1)}(\bmu),A_h(\bmu),N^{(k)})$ gives the MSRBCG method:
\[
 {\rm MSRBCG}(A_h(\bmu), f_h(\bmu), u_h^{(k-1)}(\bmu)) \coloneqq {\rm PCG}(A_h(\bmu), f_h(\bmu), u_h^{(k-1)}(\bmu); {\rm MSRB}(\cdot,A_h(\bmu),N^{(k)})).
 \]
Here the $k^{\rm th}$ RB space $W_{N^{(k)}}$ of dimension $N^{(k)}$ needs to be specifically built based on the error snapshots $\{e_h^{(k)}(\bmu_i)\}_{i=1}^{n_s}$ that are the high-fidelity solutions of the $k^{\rm th}$ error equation $A_h(\bmu)e_h^{(k)}(\bmu) = r_h^{(k)}(\bmu)$. 

  \begin{algorithm}[htb]
\caption{MSRB preconditioner with $W_N$: $u(\bmu) = {\rm MSRB}(b(\bmu),A_h(\bmu),N)$}
\label{alg:MSRB}
\begin{algorithmic}[1]
\STATE \textbf{Input:} $A_h(\bmu)\in \mathbb{R}^{\N\times\N}$, $b(\bmu)\in \mathbb{R}^{\N}$, and the RB dimension $N$
\STATE Implement the fine-smoothing process $\widetilde{u}(\bmu) = S(0,A_h(\bmu),b(\bmu),1)$
\STATE Compute residual $r(\bmu) = b(\bmu)-A_h(\bmu)\widetilde{u}(\bmu)$ 
\STATE compute $u_h^{rb}(\bmu) = {\rm RBM_{POD}}(A_h(\bmu),r(\bmu),N)$
\STATE Update $u(\bmu) = \widetilde{u}(\bmu) + u_h^{rb}(\bmu)$
\end{algorithmic}
\end{algorithm}

\section{Reduced basis warm-start iterative solvers}
\label{RBM-initialized iterative solvers}
In this section, we present the reduced basis warm-start iterative method.  The method relies on the L1-based reduced over-collocation (L1ROC) method \cite{Chen2021EIM,Chen2021L1Based}, a highly efficient variant of the reduced basis method, to provide a controllable high-quality initial prediction. The PCG method presented in Algorithm \ref{alg:PCG} is then used for refining the RB initialization. To provide some theoretical and intuitive footing of the new method's superior performance over the MSRBCG algorithm, we provide a formal analysis of the MSRB preconditioner presented in Algorithm \ref{alg:MSRB}.

\subsection{L1-based reduced over-collocation method}
The L1ROC method is a greedy-based reduced basis method, where a greedy algorithm adaptively chooses snapshots by finding the parameter location at which the error estimate is maximum. This sequential sampling framework and its avoidance of truncation after POD-type pervasive sampling make the algorithm highly efficient and much less data-intensive.

Two key points further ensure the high efficiency of the L1ROC method. First is the direct adoption as the online solver of the discrete empirical interpolation method (DEIM) \cite{chaturantabut2010nonlinear, Barrault2004empirical}. Assume we have built a RB matrix $W_N = [w_1, \dots w_N]\in \mathbb{R}^{\N\times N}$, and a set of collocations points $X^M = \{x_1,\dots,x_M\}$ ($M = 2 N - 1$). We denote by $\chi = [\chi_1,\dots,\chi_M]$ the subset of the entries corresponding to the points $X^M$. For simplicity of notation, we introduce a sub-sampling matrix as 
\begin{equation*}
P = [\pmb e_{\chi_1},\dots,\pmb e_{\chi_M}]^T\in \bbR^{M \times \calN},
\end{equation*}
where $\pmb e_{\chi_i} = [0,\dots,0,1,0,\dots,0]^T\in \bbR^{\calN}$ denotes the unit vector whose $\chi_i$-th component equals $1$. Then the RB approximation $u_h^{rb}(\bmu) = W_Nu_N(\bmu)$ is given by solving the minimum square error estimate of the following sub-sampled system
\begin{equation}\label{RB system1}
PA_h(\bmu)W_Nu_N(\bmu) \approx Pb(\bmu),
\end{equation}
namely,
\[
u_N(\bmu)=\argmax_{c\in\mathbb{R}^N}\Vert P(b(\bmu)-A_h(\bmu)W_Nc)\Vert_{\mathbb{R}^M}.
\]
The other point is the proposal of an efficient error indicator that relies on the L1 norm of the RB coefficient with respect to the chosen snapshots \cite{chen2019robust}. Assume we have selected $n$ parameters $\{\bmu_{l_i}\}_{i=1}^n$ from 
the training parameter set $\Xi_{\rm train}=\{\bmu_i\}_{i=1}^{n_s}$ and obtained the snapshots $\{u_h(\bmu_{l_i})\}_{i=1}^{n_s}$ (that are unorthogonalized RB vectors). With the RB matrix $U_n = [u_h(\bmu_{l_1}),\dots,u_h(\bmu_{l_n})]$, the RB approximation $u_h^{rb}(\bmu)$ can be represented as $u_h^{rb}(\bmu) = U_nc_n(\bmu)$. The L1-based error indicator is presented as follows
\[
\Delta^{L1}_n(\bmu) = \Vert c_n(\bmu)\Vert_{l_{1}}.
\]
Based on this, the RB space $W_N$ and the collocation points $X^M$ are built by a greedy algorithm efficiently, which is presented in Algorithm \ref{alg:offline L1ROC}. Note that
\[
(u_n,x_{\star}^n)= {\rm DEIM}(W_{n-1}, u_h(\bmu_{l_n}))
\]
in Step \ref{offline_DEIM} denotes the application of the DEIM process on a new snapshot $u_h(\bmu_{l_n})$ with respect to the previous RB space $W_{n-1}$. The return result $u_n$ is the orthonormal (under point evaluation) RB vector and $x_{\star}^n$ is the corresponding interpolation point. We denote the application of the L1ROC method including the offline and online process by 
\begin{equation}\label{L1ROC}
    u_h^{rb}(\bmu) = {\rm RBM_{L1ROC}}(A_h(\bmu),b(\bmu),N).
\end{equation}
\begin{algorithm}[htb]
\caption{Offline algorithm for L1ROC}
\label{alg:offline L1ROC}
\begin{algorithmic}[1]
\STATE \textbf{Input:} The training parameter set $\Xi_{\rm train}$ and the dimension of the RB space $N$
\STATE Initialize $W_{0} = R_0=X_s^0=X_r^0=\emptyset$ 
\STATE Choose $\bmu_{l_1}$ randomly in $\Xi_{\rm train}$ and obtain $u_h(\bmu_{l_1})$ by Algorithm \ref{alg:PCG}. Find $(u_1,x_{\star}^1)= {\rm DEIM}(W_{0}, u_h(\bmu_{l_1}))$. Then let $n = 1$, $m=1$, $X_{s}^{n} = \{x_{\star}^{1}\}$, $X^{m} = X_{s}^{n}\cup X_{r}^{n-1}$ and $W_{1} = [u_{1}]$.
\FOR{$n = 2,\cdots, N$}
\STATE Solve $u_{n-1}(\bmu)$ by the system \eqref{RB system1} with $W_{n-1}, X^{m}$ and calculate $\Delta_{n-1}^{L1}$ for every $\bmu\in \Xi_{\rm train}$.\label{alg1_RB_approximation}
\STATE Find $\bmu_{l_n} = \mathop{\arg\max}_{\bmu\in \Xi_{\rm train}}\Delta_{n-1}^{L1}(\bmu)$.
\STATE Solve $u_h(\bmu_{l_n})$ by Algorithm \ref{alg:PCG}. Find $(u_n,x_{\star}^n)= {\rm DEIM}(W_{n-1}, u_h(\bmu_{l_n}))$ and let $X_{s}^{n} = X_{s}^{n-1}\cup\{x_{\star}^{n}\}$.\label{offline_DEIM}
\STATE Compute $r_{n-1}(\bmu_{l_n}) = b(\bmu_{l_n})-A_h(\bmu_{l_n})W_{n-1}u_{n-1}(\bmu_{l_n})$. Find $(r_{n-1},x_{\star\star}^{n-1})= {\rm DEIM}(R_{n-2}, r_{n-1}(\bmu_{l_n}))$ and let $X_{r}^{n-1} = X_{r}^{n-2}\cup\{x_{\star\star}^{n-1}\}$.
\STATE Update $W_n = [W_{n-1},u_n]$, $m = m+2$, $X^m = X_{s}^{n}\cup X_{r}^{n-1}$.
\ENDFOR
\end{algorithmic}
\end{algorithm}

\subsection{Main algorithm}
Inspired by the neural network warm-start approaches \cite{zhou2023neural,nikolopoulosa2022ai} and afforded by the highly efficient online solver of the L1ROC method, we introduce the Reduced Basis Warm-Start (RBWS) iterative method. It features 
an offline training process which adds minimum overhead cost thanks to the adoption of the highly efficient L1ROC method. After this training stage, RBWS employs the L1ROC online solver to generate an accurate RB solution for each new system as the initial value. Then the PCG method presented in Algorithm \ref{alg:PCG} is applied to refine the initial value towards the exact solution. 

This RBM initialized PCG (RBI-PCG) method is presented in Algorithm \ref{alg:RBMI PCG}, where either \eqref{POD-based RBM} or \eqref{L1ROC} can be adopted as ${\rm RBM}(A_h(\bmu),f_h(\bmu),N)$. Depending on what specific ${\mathcal P}(\cdot)$ the algorithm takes, it leads to the RBM initialized MGCG (RBI-MGCG) method or the RBM initialized MSRBCG (RBI-MSRBCG) method when replacing the ${\mathcal P}(\cdot)$ in Step \ref{RBMI-PCG iteration} by the MG method of Algorithm \ref{alg:MG} or the MSRB of Algorithm \ref{alg:MSRB}.
\begin{algorithm}[htb]
\caption{RBI-PCG algorithm, generating RBI-MGCG and RBI-MSRBCG}
\label{alg:RBMI PCG}
\begin{algorithmic}[1]
\STATE \textbf{Input:} $A_h(\bmu)\in \mathbb{R}^{\N\times\N}$, $f_h(\bmu)\in \mathbb{R}^{\N}$, the RB dimension $N$, the residual tolerance $\delta$, and the maximum number of iterations $L_{\rm max}$
\STATE Generate an initial value by solving the RB system: $u_h^{(0)}(\bmu)= {\rm RBM}(A_h(\bmu),f_h(\bmu),N)$
\STATE Compute initial residual $r_h^{(0)}(\bmu) = f_h(\bmu)-A_h(\bmu)u_h^{(0)}(\bmu)$ and set $k=0$ 
\WHILE{$\Vert r_h^{(k)}(\bmu)\Vert/\Vert f_h(\bmu)\Vert<\delta$ $\&$ $k< L_{\rm max}$}
\STATE $u_h^{(k)}(\bmu) = {\rm PCG}(A_h(\bmu), f_h(\bmu), u_h^{(k-1)}(\bmu); {\mathcal P}(\cdot))$\label{RBMI-PCG iteration}
\STATE $r_h^{(k)}(\bmu) = f_h(\bmu)-A_h(\bmu)u_h^{(k)}(\bmu)$
\STATE $k = k+1$
\ENDWHILE
\end{algorithmic}
\end{algorithm}

\subsubsection{MSRB Update: Factors toward its low efficiency.}

\label{Sec: RBM preconditioner}

In this section, we aim to provide some insight into the performance of the MSRB preconditioner described in Section \ref{Sec: MSRB precondioner} in the RBM-initialized iterative method for the high-precision solution.
 For simplicity, we consider the MSRB preconditioned Richardson method that is easily rewritten as
\begin{equation}\label{iterative RBM}
\begin{cases}
u_h^{(0)}(\bmu) = {\rm RBM}(A_h(\bmu),f_h(\bmu),N),\\
u_h^{(k-\frac{1}{2})}(\bmu) = u_h^{(k-1)}(\bmu) +S(0,A_h(\bmu),r_h^{(k-1)}(\bmu),1),\ k = 1,2,\dots,\\
u_h^{(k)}(\bmu) = u_h^{(k-\frac{1}{2})}(\bmu) +{\rm RBM}(A_h(\bmu),r_h^{(k-\frac{1}{2})}(\bmu),N^{(k)}),\ k = 1,2,\dots.\\
\end{cases}
\end{equation}
Here $r_h^{(l)}(\bmu) = f_h(\bmu)-A_h(\bmu)u_h^{(l)}(\bmu)$ denotes the residual corresponding to the iterative solution $u_h^{(l)}(\bmu)$. 
The method relies on corrections afforded by the resolution of the error equations where we denote the error of the $k^{\rm th}$ iterative solution by $e_h^{(k)}(\bmu) = u_h(\bmu)-u_h^{(k)}(\bmu)$
\begin{equation}
\label{eq:erroreqn}
\begin{cases}
A_h(\bmu)e_h^{(k-1)}(\bmu)= r_h^{(k-1)}(\bmu)  & \mbox{(by the smoother)} \\
A_h(\bmu)e_h^{(k-\frac{1}{2})}(\bmu)= r_h^{(k-\frac{1}{2})}(\bmu) &   \mbox{(by the RBM online solver)}.
\end{cases}
\end{equation}
The first one is a full order model that is, albeit expensive, well-known (e.g. Multigrid literature) to be effective in driving the approximation to convergence. However, the second one, less well-understood, relies on {\em the low-rank approximability of the error manifold}. This strategy comes with two challenges.

\noindent {\bf 1. Improving accuracy requirement on RBM as $k\uparrow$.} Because this is a correction step for $u_h^{(k-\frac{1}{2})}(\bmu)$, its accuracy should be above the error committed by $u_h^{(k-\frac{1}{2})}(\bmu)$. As shown by the next lemma, this means that the RB dimension will have to increase with respect to the iteration index $k$.
\begin{lemma}
    Given that the linear system is well-conditioned and the Kolmogorov n-width of the error manifold
    \[
    \calW \coloneqq \{A_h(\bmu)^{-1}(r_h^{(k-\frac{1}{2})}(\bmu)): \bmu \in \calD\}
    \]
    decreases at a polynomial or exponential rate uniformly with respect to $k$, it follows that the dimension $n$ of the reduced basis manifold must increase as the iteration index $k$ increases. 
    \end{lemma}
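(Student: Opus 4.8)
The plan is to argue by contradiction against the hypothesis, combining the well-conditioning of $A_h(\bmu)$ with the assumed decay of the Kolmogorov $n$-width of $\calW$ to show that if the RB dimension $n$ stayed bounded, the correction scheme could not keep driving the residual down, contradicting the very structure of the iteration in \eqref{iterative RBM}. Concretely, I would first fix the standard RBM error/estimate chain: since the online RBM solver at step $k$ extracts the best (up to a stability constant) approximation of $e_h^{(k-\frac12)}(\bmu)=A_h(\bmu)^{-1}r_h^{(k-\frac12)}(\bmu)$ from a space $W_{N^{(k)}}$ of dimension $N^{(k)}$, the post-correction error obeys
\begin{equation}
\label{eq:rbm_bestapprox}
\|e_h^{(k)}(\bmu)\| \;\le\; C\,\inf_{v\in W_{N^{(k)}}}\|e_h^{(k-\frac12)}(\bmu)-v\| \;\le\; C\, d_{N^{(k)}}(\calW),
\end{equation}
where $d_n(\calW)$ is the Kolmogorov $n$-width and $C$ absorbs the DEIM/Lebesgue stability constant, which is controlled because the system is well-conditioned. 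Dually, well-conditioning also gives the two-sided comparison $c\,\|e_h^{(\ell)}(\bmu)\| \le \|r_h^{(\ell)}(\bmu)\| \le C'\,\|e_h^{(\ell)}(\bmu)\|$ for $\ell\in\{k-\tfrac12,\,k\}$, and the smoother step is non-expansive in the relevant norm (contraction factor $\rho<1$, or at worst $\rho\le 1$), so $\|e_h^{(k-\frac12)}(\bmu)\|\le \rho\,\|e_h^{(k-1)}(\bmu)\|$.

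Next I would quantify the requirement stated in the paragraph preceding the lemma: for the RBM correction to actually reduce the error at iteration $k$, its accuracy must beat the incoming error, i.e. one needs $C\,d_{N^{(k)}}(\calW) < \theta\,\|e_h^{(k-\frac12)}(\bmu)\|$ for some $\theta<1$ uniformly in $\bmu$. Chaining this over $k$ yields a geometric (or faster) decay $\sup_{\bmu}\|e_h^{(k)}(\bmu)\| \le \gamma^k\,\sup_\bmu\|e_h^{(0)}(\bmu)\|$ with $\gamma=\theta\rho<1$, hence $\sup_\bmu\|e_h^{(k-\frac12)}(\bmu)\|\to 0$ as $k\uparrow$. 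But the width decay hypothesis is assumed to hold \emph{uniformly in $k$}: there are fixed functions $\varphi(n)$ (polynomial, $\varphi(n)=\alpha n^{-s}$, or exponential, $\varphi(n)=\alpha e^{-\beta n}$) with $d_n(\calW^{(k)})\ge c_0\,\varphi(n)$ — the honest reading of "decreases at rate $\varphi$" is a matching lower bound, since an upper bound alone would make the claim false (one could take $n$ huge once and be done). If $N^{(k)}\equiv N_\star$ were bounded, then $C\,d_{N^{(k)}}(\calW)\ge C c_0 \varphi(N_\star)>0$ is bounded below by a positive constant, so \eqref{eq:rbm_bestapprox} cannot be made smaller than $\theta\,\|e_h^{(k-\frac12)}(\bmu)\|$ once $\|e_h^{(k-\frac12)}(\bmu)\|$ has dropped below $Cc_0\varphi(N_\star)/\theta$ — which it must, by the very decay we just derived (or which is demanded for convergence to machine precision). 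Therefore $N^{(k)}\to\infty$, and inverting $\varphi$ gives the rate: $N^{(k)}\gtrsim \varphi^{-1}(\|e_h^{(k-\frac12)}\|/C)$, i.e. $N^{(k)}$ grows at least logarithmically (for exponential $n$-width decay) or algebraically (for polynomial decay) in $k$.

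I would finish by stating the resulting quantitative form — e.g., if $\|e_h^{(k-\frac12)}(\bmu)\|\sim \gamma^k$ and $d_n(\calW)\sim \alpha e^{-\beta n}$ then $N^{(k)} \gtrsim (k\log(1/\gamma) + \log\alpha)/\beta$, while $d_n(\calW)\sim \alpha n^{-s}$ forces $N^{(k)}\gtrsim \gamma^{-k/s}$ up to constants — which both exhibit the monotone growth asserted by the lemma. The main obstacle, and the place where the argument is "formal" rather than fully rigorous, is pinning down the stability constant $C$ in \eqref{eq:rbm_bestapprox}: for the DEIM/over-collocation online solver the best-approximation property holds only up to the Lebesgue constant of the collocation set, which in principle can grow with $N^{(k)}$; one needs the well-conditioning hypothesis (and the greedy construction of Algorithm \ref{alg:offline L1ROC}) to keep $C$ uniformly bounded, and making that precise — or alternatively invoking the known comparability of greedy/$n$-width decay rates — is the delicate step. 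A secondary subtlety is that the hypothesis is phrased as an upper bound on the $n$-width, so the contradiction really uses the implicit understanding that the decay rate is sharp (a genuine lower bound is available); I would flag this explicitly and, if a lower bound is not assumed, weaken the conclusion to "the RB dimension cannot remain bounded if the correction is to outpace a fixed tolerance," which is all the subsequent discussion of MSRB inefficiency actually needs.
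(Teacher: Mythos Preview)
Your argument shares the paper's formal spirit --- both compare $\|e_h^{(k)}(\bmu)\|$ to an $n$-width and use well-conditioning to pass between errors and residuals --- but the routes differ, and your version has an internal inconsistency the paper sidesteps.

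The paper does not argue by contradiction. It invokes the greedy convergence result of \cite{Binev2011Convergence}: if $d_n(\calW)\approx Mn^{-\alpha}$ then the greedy error obeys $\sigma_n(\calW)\approx CMn^{-\alpha}$ with $C$ depending only on $\alpha$ and the weak-greedy parameter $\gamma$, \emph{not} on $M$. The key move is to identify the manifold diameter $M=\max_{w\in\calW}\|w\|$ with $\gamma^{-1}\max_{\bmu}\|r_h^{(k-\frac12)}(\bmu)\|$ via well-conditioning, yielding $\|e_h^{(k)}(\bmu)\|\approx C\gamma^{-1}\bigl(\max_{\bmu}\|r_h^{(k-\frac12)}(\bmu)\|\bigr)\,n^{-\alpha}$. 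Since $C,\gamma,\alpha$ are $k$-independent by hypothesis, the paper reads off that $n$ must grow for this quantity to keep shrinking relative to the incoming residual.

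Your contradiction route instead posits an \emph{absolute} lower bound $d_n(\calW^{(k)})\ge c_0\varphi(n)$ uniform in $k$. This cannot hold: $\calW^{(k)}$ consists of the errors $e_h^{(k-\frac12)}(\bmu)$, whose norms you yourself drive to zero, so $d_n(\calW^{(k)})\le\max_{w\in\calW^{(k)}}\|w\|\to 0$ for every fixed $n$, contradicting your uniform lower bound outright before any comparison with $\|e_h^{(k-\frac12)}(\bmu)\|$ is made. The coherent reading of ``rate uniform in $k$'' is the \emph{relative} bound $d_n(\calW^{(k)})\approx M_k\varphi(n)$ with $M_k$ the shrinking diameter --- precisely what the paper tracks --- but under that reading your contradiction dissolves: the requirement $Cd_{N_\star}(\calW^{(k)})<\theta\,\|e_h^{(k-\frac12)}(\bmu)\|$ becomes $CM_k\varphi(N_\star)<\theta M_k$, which any large enough fixed $N_\star$ satisfies. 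The paper's argument is admittedly just as ``formal'' once one presses on its $\approx$ (as its footnote concedes), and your explicit caveats about the Lebesgue constant and the upper-versus-lower-bound issue are more candid than the paper on those points; but the scaling of $M$ with $k$ is the ingredient you are missing.
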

\begin{proof}
It is easy to see, from \eqref{iterative RBM}, that
\[
e_h^{(k)}(\bmu) = e_h^{(k-\frac{1}{2})}(\bmu) - \widetilde{e}_h^{(k-\frac{1}{2})}(\bmu)
\]
where $\widetilde{e}_h^{(k-\frac{1}{2})}(\bmu)$ represents the ($n$-dimensional) RBM online approximation of $e_h^{(k-\frac{1}{2})}(\bmu)$ by \eqref{eq:erroreqn}. This is lower-bounded by the best approximation error 
\[
\lVert e_h^{(k-\frac{1}{2})}(\bmu) - P_{W_n} e_h^{(k-\frac{1}{2})}(\bmu) \rVert \le \sigma_n(\calW) \coloneqq \max_{w \in \calW} \lVert w - P_{W_n} w \rVert
\]
where $P_{W_n}$ denotes the projection into the RB space $W_n$. We have that, with the RBM greedy algorithm, $\sigma_n(\calW)$ inherits the same rate of decay of the Kolmogorov $n$-width $d_n(\calW)$ as follows\cite{Binev2011Convergence}\footnote{To simplify our formal analysis, we assume that the upper bounds on the decay rates in \cite{Binev2011Convergence} are actually attainable, as typically confirmed numerically in the RBM literature.} :
\begin{itemize}
    \item {\bf Polynomial decay.} If $d_n(\calW) \approx M n^{-\alpha}$, then $\sigma_n(\calW) \approx C M n^{-\alpha}$ with $C\coloneqq q^{\frac{1}{2}}(4 q)^\alpha$ and $q\coloneqq \lceil 2^{\alpha+1} \gamma^{-1}\rceil^2$. Here $M \coloneqq \max_{w\in\calW} \lVert w \rVert$ and $\gamma$ is the norm-equivalency constant between $\lVert \cdot \rVert$ and the error estimator adopted by the RBM. Given that $A_h(\bmu)$ is well-conditioned, we have $M = \gamma^{-1} \left(\max_{\bmu \in \calD} \lVert r_h^{(k-\frac{1}{2})}(\bmu) \rVert \right)$ and therefore 
    \[
    \lVert e_h^{(k)}(\bmu) \rVert \approx C \gamma^{-1} \left(\max_{\bmu \in \calD} \lVert r_h^{(k-\frac{1}{2})}(\bmu) \rVert \right) n^{-\alpha}.
    \]
    Given that $C$ is dependent on $\alpha$ (which is assumed to be uniform with $k$) and $\gamma$ (which is determined solely by $A_h(\bmu)$), we conclude that for $\lVert e_h^{(k)}(\bmu) \rVert$ to decay with respect to $k$, the RB dimension $n$ should increase as $k$ increases (i.e. the iteration goes on).
    \item {\bf Exponential decay.} The result follows similarly as in the case of polynomial decay thanks to the inheritance of the decay rate of $\sigma_n(\calW)$ proved in \cite{Binev2011Convergence}.
\end{itemize}
\end{proof}
\noindent {\bf 2. Degradation of the low-rank structure.} We first define the residual manifold at step $k$
\[
 \calR^{(k)} \coloneqq \left \{ r_h^{(k-\frac{1}{2})}(\bmu) = f_h(\bmu)-A_h(\bmu)u_h^{(k-\frac{1}{2})}(\bmu): \bmu \in \calD \right \}.
\]
As $k$ increases and $u_h^{(k-\frac{1}{2})}(\bmu)$ gets more accurate, $\lVert r_h^{(k-\frac{1}{2})}(\bmu) \rVert$ decreases. Since we aim to have $\lVert r_h^{(k-\frac{1}{2})}(\bmu) \rVert$ at the level of machine accuracy at convergence, elements of $r_h^{(k-\frac{1}{2})}(\bmu)$ will become more and more comparable to the round off error. This means that the Kolmogorov $n$-width
\[
d_n^k \coloneqq d_n(\calR^{(k)})
\]
will likely decay slower as $k$ increases. While this is confirmed by our numerical results (see Figure \ref{fig:widthchange} which also shows the decay rates for the residual manifolds), we intend to leave the theoretical proof of this degradation to future work.

\begin{remark}
The compounding impact of the two challenges enunciated above is that the RB space dimension must increase significantly as the iteration proceeds if we were to maintain the convergence rate of the iterative solver. However, this comes with a significant cost (see Appendix \ref{appendix:offline cost}) making the preconditioning not cost-effective. On the other hand, if we aim to control the computational cost (by e.g. fixing the RB dimension), the convergence rate will deteriorate as the iterative solver proceeds. This is confirmed by our numerical results next - the convergence of the RBWS does not deteriorate while the RBI-MSRBCG does.
\end{remark}

\section{Numerical tests}\label{sec:numerical result}
To test our algorithms, we set $\Omega = (0,1)^3$ with non-intersecting boundaries $\Gamma_D$ and $\Gamma_N$ such that $\partial \Omega = \Gamma_D\cup\Gamma_N$.  We consider the following parametrized diffusion equation
\begin{equation}\label{anisotropic diffusion eq}
	\left\{
	\begin{aligned}
		&-\nabla\cdot(\mathcal{K}(\bmu)\nabla u(\bmu)) = f(\bmu),\ {\rm in}\  \Omega,\\
&u(\bmu) = g_D(\bmu),\ {\rm on}\ \Gamma_D,\\
&\frac{\partial u(\bmu)}{\partial n} = g_N(\bmu),\ {\rm on}\ \Gamma_N.
	\end{aligned}
	\right.
\end{equation}
Here, the diffusion tensor $\mathcal{K}(\bmu)$, the source term $f(\bmu)$, and the boundary conditions $g_D(\bmu)$, $g_N(\bmu)$ may depend on parameter $\bmu\in \mathcal{D}$. Specifically, the following two examples are given as considered in \cite{nguyen2018reduced, Dal2018MultiSpaceReducedBasis} respectively.

\paragraph{\textbf{Example 1}} Dirichlet boundary value problem where the specific definitions are given by
\[
\mathcal{K}(\bmu) = 1+\mu_1(\sin(20\pi(4(x-\frac{1}{2})^2+(y-\frac{1}{2})^2+(z-\frac{1}{2})^2))^2,
\]
\[
f = 3\pi^2\sin(\pi x)\sin(\pi y)\sin(\pi z)
\]
\[
g_D(\bmu) = (1-\mu_2)\cos(10\pi(4(x-\frac{1}{2})^2+(y-\frac{1}{2})^2+(z-\frac{1}{2})^2))+\mu_2\cos(10\pi(x+y+z)).
\]
The parameter domain is $\mathcal{D}= [0,2]\times[0,1]$.

\paragraph{\textbf{Example 2}} Mixed boundary value problem
\[
\Gamma_N = \{\pmb x = (x,y,z)\in\overline\Omega:x = 1\},\ \Gamma_D = \partial\Omega\backslash\Gamma_N.
\]
The diffusion tensor is
\[
\mathcal{K}(\bmu) = \mathcal{K}(\pmb{x}; \bmu) = \nu(\pmb{x};\bmu){\rm diag}(1,1,10^{-2}),
\]
where $\nu(\pmb{x};\bmu)$ is the piecewise constant on four subregions $\Omega_1 = (0,1)\times(0,0.5)\times(0,0.5)$, $\Omega_2 = (0,1)\times(0,0.5)\times(0.5,1)$, $\Omega_3 = (0,1)\times(0.5,1)\times(0,0.5)$, and $\Omega_4 = (0,1)\times(0.5,1)\times(0.5,1)$, denoted by
\begin{equation*}
\nu(\pmb x;\bmu) = 
\begin{cases}
\mu_j,\ \pmb x \in \Omega_j,\ j = 1,\dots,3,\\
1,\ \ \pmb x \in \Omega_4.
\end{cases}
\end{equation*}
We consider the following parameter-dependent Gaussian function as the source term
\[
f(\pmb x;\bmu) = \mu_7 + \frac{\exp\left(-((x-\mu_4)^2+(y-\mu_5)^2+(z-\mu_6)^2)/\mu_7\right)}{\mu_7},
\]
and homogeneous boundary conditions
\[
g_D(\bmu) = 0,\ g_N(\bmu) = 0.
\]
The 7-dimensional parameter domain is $\mathcal{D}= [0.1,1]^3\times[0.4,0.6]^3\times[0.25,0.5]$.

\subsection{Results on convergence and efficiency of RBWS}

For both examples, linear finite elements as implemented in the Matlab package iFEM \cite{chen2009integrated} with DoFs $\N = 35,937$ are adopted as the high-fidelity discretization. Subsequently, we employ the three methods detailed in Table \ref{tab:methods} to solve the resulting discrete system \eqref{discrete equation}.
\begin{table}[thbp]
\renewcommand{\arraystretch}{1.1}
    \centering
    \begin{tabular}{|c|c|c|}
    \hline
    Method & Iterative solver & Initial guess\\
    \hline
    \textbf{MGCG} &  MGCG (Section \ref{Sec:MGCG}) & $u_h^{(0)}(\bmu)= 0$\\
    \textbf{RBI-MGCG} & MGCG (Section \ref{Sec:MGCG}) & $u_h^{(0)}(\bmu)= {\rm RBM_{L1ROC}}(A_h(\bmu),f_h(\bmu),N)$\\
    \textbf{RBI-MSRBCG} & MSRBCG (Section \ref{Sec: MSRB precondioner}) & $u_h^{(0)}(\bmu)= {\rm RBM_{POD}}(A_h(\bmu),f_h(\bmu),N)$\\
    \hline
    \end{tabular}
    \caption{Three methods tested in this paper.}
    \label{tab:methods}
\end{table}
For the MGCG method and the RBI-MGCG method, we use an MG method with $4$ levels ($J = 3$) as the preconditioner. The L1ROC method is used for the initialization of the RBI-MGCG method. The POD-based RBM is used for the initialization of the RBI-MSRBCG method. Here we consider a fixed RB dimension $N$ for the MSRB preconditioners at each iteration step, i.e., $N^{(k)}=N$.

For Example 1, 70 parameters are sampled by the popular Latin hypercube sampling (LHS) method \cite{Mckay2000LHS} to build the training set $\Xi_{\rm train}$ for the L1ROC method and POD-based RBM. Then we respectively construct the RB spaces of different dimensions $N = 10, 15, 20$ and test the corresponding RBM initialized iterative methods. The parametrized linear system is solved for a testing set $\Xi_{\rm test}$ consisting of $500$ parameters with the residual tolerance $\delta = 10^{-16}$ and the maximum number of iterations $L_{\rm max} = 40$. To eliminate the parametric variations, we calculate the average value of the norms of all relative residuals
\[
r_{\rm ave}^{(k)} = \frac{\sum_{\bmu\in \Xi_{\rm test}}\Vert r_h^{(k)}(\bmu)\Vert/\Vert f_h(\bmu)\Vert}{\# \Xi_{\rm test}},
\]
where $r_h^{(k)}(\bmu) = f_h(\bmu)-A_h(\bmu)u_h^{(k)}(\bmu)$. And the convergence results of the average residual $r_{\rm ave}^{(k)}$ as a function of the iteration $k$ are presented in Figure \ref{fig:convergence and efficiency results} top left. We can see that the RBM-initialized methods starting from more accurate initial values require fewer iterations than the method without such a warm start. The MSRB preconditioner could provide a significant acceleration for the CG method within a certain precision. However, when the accuracy increases further, the MSRB preconditioner is significantly degraded, which leads to a much slower convergence. This is consistent with what our formal analysis in Section \ref{Sec: RBM preconditioner} predicts.

To demonstrate the efficiency benefit brought by the RBWS method, we record the cumulative runtime as the number of linear solves increases, which is shown in Figure \ref{fig:convergence and efficiency results} top right. The values corresponding to zero solves represent the computational cost of the offline training process. It can be seen that the RBI-MGCG methods begin to pay off quickly when the parametrized system is solved about 60 times thanks to its high online-efficiency. For the RBI-MSRB methods, the construction of the MSRB preconditioners is much more time-consuming and generates much less marginal savings online compared with the RBI-MGCG method. 
For a more detailed comparison, we report in Tabel \ref{tab:Example1} the break-even point (BEP) for the two RBWS methods that is defined as
\[
{\rm BEP} = \frac{t_{\rm off}}{t_{\rm on}({\rm MGCG})-t_{\rm on}({\rm RBWS})}.
\]
Here, $t_{\rm off}$ denotes the computation time for the offline stage, while $t_{\rm on}$ means the average computation time for the online stage. We specify two different values for the tolerance $\delta$ and record the iteration step number $L$ required for the average residual $r_{\rm ave}^{(k)}$ to fall below $\delta$. We see that the advantage of RBM preconditioning (adopted by RBI-MSRBCG) disappears when we move from low ($\delta = 10^{-8}$) to high ($\delta = 10^{-16}$) precision.

For Example 2, we build the RB spaces of dimensions $N = 100,200,300$ based on 500 training parameters for the L1ROC method and POD-based RBM respectively, and test all methods for 30,000 testing parameters. The results of the convergence and the cumulative time are presented in the bottom row of Figure \ref{fig:convergence and efficiency results}. And the the comparison results of $L$, $t_{\rm off}$, $t_{\rm on}$, and BEP are shown in Table \ref{tab:Example2}. 
The comparison of RBI-MGCG and RBI-MSRBCG is consistent with Example 1.
\begin{figure}[htbp]
\centering
\subfigure{
\includegraphics[width=0.45\textwidth]{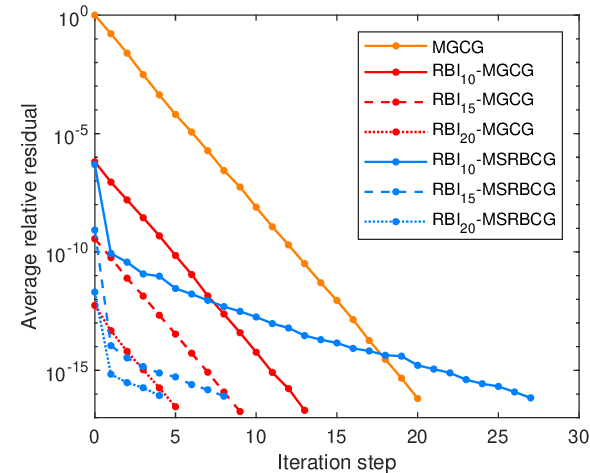}
}
\subfigure{
\includegraphics[width=0.45\textwidth]{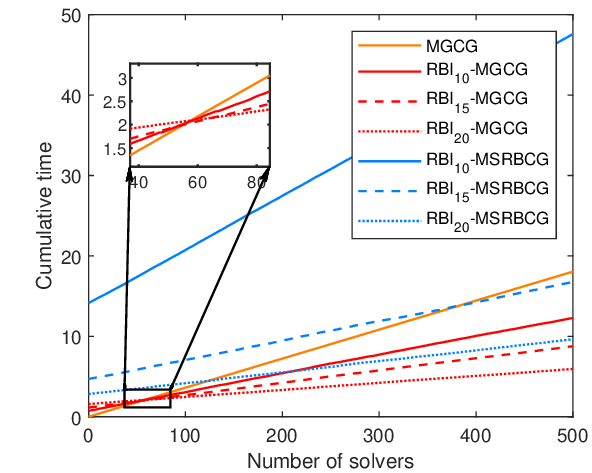}
}\\
\subfigure{
\includegraphics[width=0.45\textwidth]{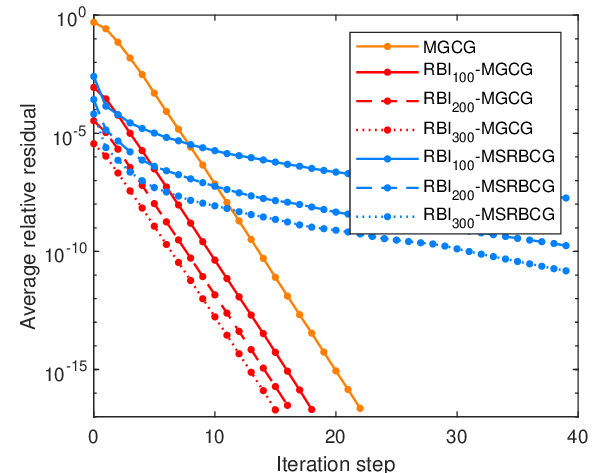}
}
\subfigure{
\includegraphics[width=0.45\textwidth]{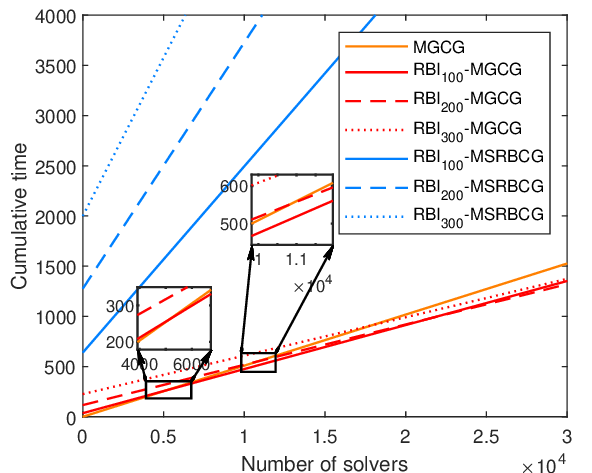}
}
\caption{Left: The convergence result with the iteration. Right: The total computational cost as the number of online solvers increases. (Top: Example 1, Bottom: Example 2.)}
\label{fig:convergence and efficiency results}
\end{figure}
\begin{table}[htbp]
\centering
{\fontsize{9.5}{14}\selectfont
\begin{tabular}{ | c | c|    c | c|c|c|}
\Xhline{1pt}
\multicolumn{1}{  |c |}{\multirow{2}{*}{Method}}
&
\multicolumn{1}{  c |}{\multirow{2}{*}{N}}
&\multicolumn{4}{ c |}{$\delta = 10^{-8}$}\\ \cline{3-6}
&&$L$&$t_{\rm off}$ &$t_{\rm on}$&BEP\\\hline
\textbf{MGCG}& --&  10&   -- &  1.87E-02 &--  \\
\hline
\textbf{RBI-}& 10&  3&7.31E-01&   7.10E-03 &   61   \\
\textbf{MGCG}& 15 &0 &1.15E00&   8.26E-04  &63    \\
& 20 &0&1.58E00&   9.32E-04  &86     \\
\hline
\textbf{RBI-}& 10&1  &6.86E-01&   4.62E-03 &   49   \\
\textbf{MSRBCG}& 15 &0&8.92E-02&   1.54E-03  &7     \\
& 20 &0&9.51E-02&   1.72E-03  &7     \\
\Xhline{1pt}
\end{tabular}
\begin{tabular}{ |c| c| c|c|}
\Xhline{1pt}
\multicolumn{4}{ |c |}{$\delta = 10^{-16}$}	\\\cline{1-4}
$L$&$t_{\rm off}$ &$t_{\rm on}$&BEP	\\\hline
20& --    &  3.61E-02 & -- \\
\hline
13& 7.31E-01&2.30E-02 &58  \\
9& 1.15E00&1.52E-02 &56   \\
5& 1.58E00&8.79E-03 &59   \\
\hline
27& 1.41E01&6.63E-02 &$\infty$   \\
8& 4.70E00&2.39E-02 &383  \\
4& 2.82E00&1.38E-02 &125   \\
\Xhline{1pt}
\end{tabular}
}
\caption{Detailed results with different $\delta = 10^{-8}, 10^{-16}$ for Example 1}
\label{tab:Example1}
\end{table}

\begin{table}[htbp]
\centering
{\fontsize{9.5}{14}\selectfont
\begin{tabular}{ | c | c|    c | c|c|c|}
\Xhline{1pt}
\multicolumn{1}{  |c |}{\multirow{2}{*}{Method}}
&
\multicolumn{1}{  c |}{\multirow{2}{*}{N}}
&\multicolumn{4}{ c |}{$\delta = 10^{-8}$}\\ \cline{3-6}
&&$L$&$t_{\rm off}$ &$t_{\rm on}$&BEP\\\hline
\textbf{MGCG}& --&  11&   -- &  1.49E-02 &--  \\
\hline
\textbf{RBI-}& 100&  7&3.56E01&   1.04E-02 &   3929   \\
\textbf{MGCG}& 200 &6 &1.16E02&   8.63E-03  &9248    \\
& 300 &4&2.26E02&   7.58E-03  &15374     \\
\hline
\textbf{RBI-}& 100&$\ge40$  &$\ge$6.39E02&   $\ge$9.45E-02 &   --   \\
\textbf{MSRBCG}& 200 &17&5.45E02&   6.57E-02 &   $\infty$     \\
& 300 &10&4.66E02&   5.19E-02 &   $\infty$    \\
\Xhline{1pt}
\end{tabular}
\begin{tabular}{ |c| c| c|c|}
\Xhline{1pt}
\multicolumn{4}{ |c |}{$\delta = 10^{-16}$}	\\\cline{1-4}
$L$&$t_{\rm off}$ &$t_{\rm on}$&BEP	\\\hline
22& --    &  2.67E-02 & -- \\
\hline
18& 3.56E01&2.20E-02 &5132   \\
16& 1.16E02&2.01E-02 &10829   \\
15& 2.26E02&1.91E-02 &17910   \\
\hline
$\ge40$& $\ge$6.39E02&$\ge$9.28E-02 &--    \\
$\ge40$& $\ge$1.28E03&$\ge$1.23E-02 &--   \\
$\ge40$& $\ge$2.00E03&$\ge$1.58E-02 &--    \\
\Xhline{1pt}
\end{tabular}
}
\caption{Detailed results with different $\delta = 10^{-8}, 10^{-16}$ for Example 2}
\label{tab:Example2}
\end{table}

\subsection{Numerical confirmation of the efficiency degradation of the RB preconditioner}

To confirm our analysis in Section \ref{Sec: RBM preconditioner}, we estimate the approximation accuracy of the RB space by computing the maximum relative residual of the RB solutions solved by \eqref{L1ROC} over the test set,
\[
r_N = \max_{\bmu\in \Xi_{\rm test}}\frac{\Vert f_h(\bmu)-A_h(\bmu)u_h^{rb}(\bmu)\Vert}{\Vert f_h(\bmu)\Vert}.
\]
The convergence of $r_N$ with the RB dimension $N$ is presented in Figure \ref{fig:offline cost}. Comparing the approximation accuracy of the RB space with the convergence results of the RBI-MSRBCG methods in Figure \ref{fig:convergence and efficiency results}, we can conclude that the preconditioner starts to be less efficient shortly after the iterative accuracy exceeds the approximation accuracy of the RB space. Moreover, we check the decay rate of the Kolmogorov $n$-width of the residual manifold $d_h^k= d_n(\calR^{(k)})$ by running a POD on the residual snapshot collection 
\[
 \overline{\calR}^{(k)} \coloneqq \left \{ r_h^{(k-\frac{1}{2})}(\bmu) = f_h(\bmu)-A_h(\bmu)u_h^{(k-\frac{1}{2})}(\bmu): \bmu \in \Xi_{\rm train}=\{\bmu_n\}_{n=1}^{n_s} \right \},
\]
which are obtained by the RBI-MSRBCG method with the RB dimension $N=20$ for Example 1 and $N = 300$ for Example 2. The rate of decay of the relative eigenvalues $\lambda_n/\lambda_{\rm max}$ is demonstrated in Figure \ref{fig:widthchange}. It's clear that the relative eigenvalues decay fast during the first two iterations, and decrease much slower as the iteration goes on.
\begin{figure}[htbp]
\centering
\subfigure{
\includegraphics[width=0.45\textwidth]{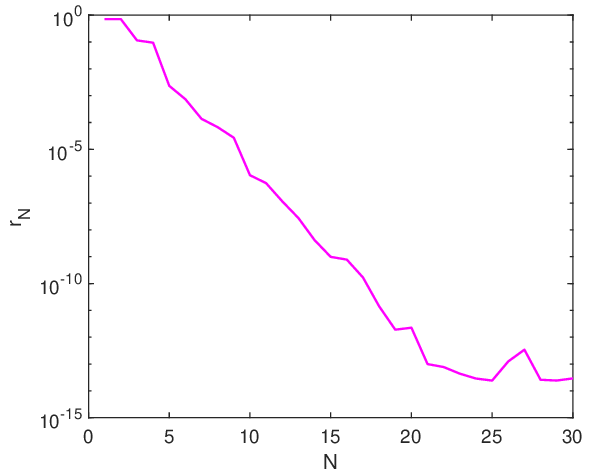}
}
\subfigure{
\includegraphics[width=0.45\textwidth]{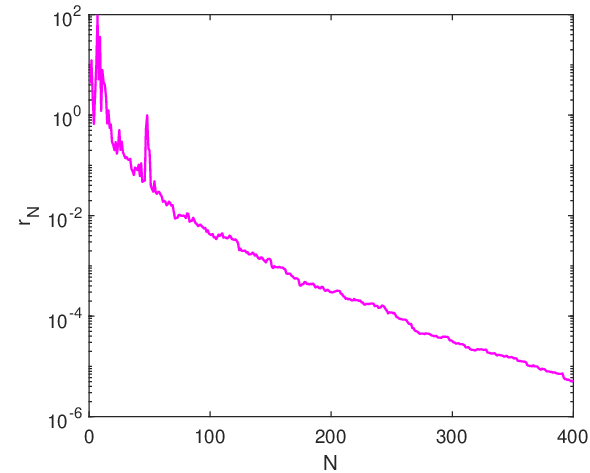}
}
\caption{The convergence of the relative residual $r_N$ with the RB dimension $N$. (Left: Example 1, Right: Example 2.)}
\label{fig:offline cost}
\end{figure}
\begin{figure}[htbp]
\centering
\subfigure{
\includegraphics[width=0.45\textwidth]{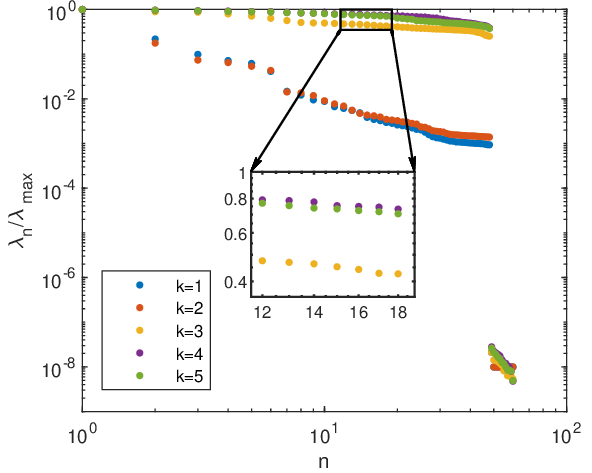}
}
\subfigure{
\includegraphics[width=0.45\textwidth]{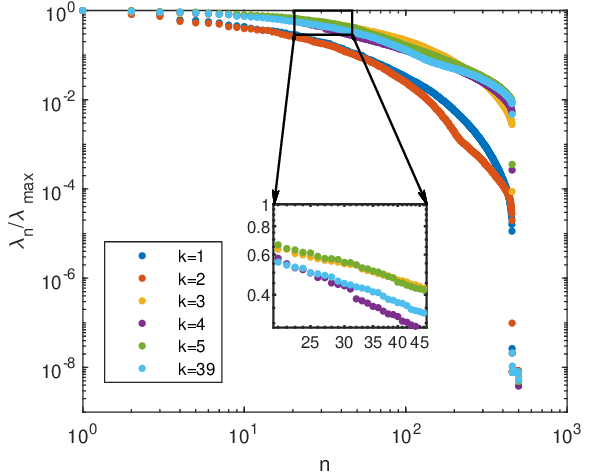}
}
\caption{Decay of the eigenvalues for the residual snapshots computed by the RBI-MSRBCG method. (Left: Example 1, Right: Example 2.)}
\label{fig:widthchange}
\end{figure}

\section{Conclusion}\label{sec:Conclusion}
We propose and test a reduced basis warm-start approach employing a highly efficient RBM variant to initialize the high-fidelity iterative method to obtain the high-precision solutions of the parametrized linear systems. Moreover, we discuss the efficiency limitation of RBM, for situations when solutions with machine accuracy are sought, and when it is adopted as a preconditioner in the iterative methods. The numerical results demonstrate the advantage of the RBWS initialization and verify the limitation of RBM as a preconditioner.

\bibliographystyle{abbrv}
\bibliography{ref}
\begin{appendix}
\section{The estimate of the computational cost of the offline stage}\label{appendix:offline cost}
In this section, we give an estimation of the computational cost of the greedy-based RBM and illustrate the cost challenge of developing high-precision RB solutions. 
Let $\mathcal{M} = \{u(\bmu):\bmu\in \mathcal{D}\}$ denote the solution manifold consisting of all parameter-dependent solutions. Here we first recall an essential definition in the numerical analysis of the RBM, the Kolmogorov $n$-width, indicating the difference between the optimal $n$-dimensional linear approximation space and the solution manifold $\mathcal{M}$
\begin{equation*}
d_n = d_n(\mathcal{M}) = \inf_{{\rm dim}(\mathcal{L})=n}\sup_{u(\bmu)\in \mathcal{M}}{\rm dist}(u(\bmu),\mathcal{L}),
\end{equation*}
where $\mathcal{L}$ denotes the $n$-dimensional linear approximation spaces. Assuming the RB space $W_n$ of dimension $n$ is built by the weak greedy algorithm, we denote the following approximation error
\begin{equation*}
\sigma_n = \sigma_n(\mathcal{M}) = \max_{u(\bmu)\in\mathcal{M}}\Vert u(\bmu)-P_{W_n}u(\bmu)\Vert_V,
\end{equation*}
where $P_{W_n}$ is the projection operator on $W_n$. It was proved in \cite{Binev2011Convergence,DeVore2012Greedy,Buffa2012prioriconvergence} that any polynomial rate of decay achieved by
the Kolmogorov $n$-width $d_n$ is retained by the approximation error $\sigma_n$. Precisely, the following holds, see \cite[Theorem 3.1]{Binev2011Convergence}.
\begin{theorem}\label{Them 1}
For $M>0$ and $s>0$, suppose that $d_n(\mathcal{M})\le M(\max\{1,n\})^{-s}$, $n\ge 0$. We have $\sigma_n(\mathcal{M})\le M_s \gamma^{-2}(\max\{1,n\})^{-s}$, where $M_s = 2^{4s+1}M$, and $\gamma$ is a positive threshold parameter independent of $n$ in the weak greedy algorithm.
\end{theorem}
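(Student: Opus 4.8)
The plan is to follow the route of \cite{Binev2011Convergence,DeVore2012Greedy}: read off a triangular ``coordinate matrix'' from the weak greedy selection, extract a volume inequality that pits the realized greedy errors against the Kolmogorov width $d_n(\mathcal{M})$, and then bootstrap that inequality against the hypothesized polynomial decay of $d_n(\mathcal{M})$ to recover the same rate for $\sigma_n(\mathcal{M})$.

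First I would pin down the weak greedy output: $W_0=\{0\}$, and, having built $W_{n-1}$, choose $u_n\in\mathcal{M}$ with $\|u_n-P_{W_{n-1}}u_n\|\ge\gamma\,\sigma_{n-1}(\mathcal{M})$ and set $W_n:=W_{n-1}+\mathrm{span}\{u_n\}$ (one may assume every $\sigma_n(\mathcal{M})>0$, else the statement is trivial). Gram--Schmidt on $u_1,u_2,\dots$ produces an orthonormal system $\{\phi_j\}$ with $\mathrm{span}\{\phi_1,\dots,\phi_n\}=W_n$, and I would record the upper-triangular matrix $A=(a_{ij})$, $a_{ij}:=\langle u_j,\phi_i\rangle$. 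Its relevant features are: the diagonal $a_{jj}=g_j:=\|u_j-P_{W_{j-1}}u_j\|$ satisfies $g_j\ge\gamma\,\sigma_{j-1}(\mathcal{M})$; the sequence $\sigma_n(\mathcal{M})$ is non-increasing; each column of $A$ has norm $\le\sigma_0(\mathcal{M})=d_0(\mathcal{M})$; and, since $\phi_{K+1},\dots,\phi_{K+\ell}$ are orthogonal to $W_K$, a diagonal sub-block of $A$ on rows/columns $K+1,\dots,K+\ell$ is precisely the coordinate matrix of the orthogonal projections of $u_{K+1},\dots,u_{K+\ell}$ onto $V:=\mathrm{span}\{\phi_{K+1},\dots,\phi_{K+\ell}\}$, so its determinant equals $\prod_{i=1}^{\ell}g_{K+i}$ and also the volume of the parallelepiped spanned by those projections.

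The crux is a volume-versus-width lemma. For a block of $\ell$ consecutive indices I would write each $u_{K+i}=w_i+e_i$ with $w_i$ in an $(\ell-1)$-dimensional near-best subspace furnished by the definition of $d_{\ell-1}(\mathcal{M})$ and $\|e_i\|\le d_{\ell-1}(\mathcal{M})$, and expand the determinant above by multilinearity in its columns. The term in which every column comes from a $w_i$ vanishes, since the $w_i$ lie in a space of dimension $\le\ell-1<\ell$; every surviving term carries at least one $e_i$-column, so Hadamard's inequality together with the norm bounds (the $e_i$-columns have norm $\le d_{\ell-1}(\mathcal{M})$, the $w_i$-columns have norm $\lesssim\sigma_K(\mathcal{M})$ because $V\perp W_K$) gives $\prod_{i=1}^{\ell}g_{K+i}\lesssim p(\ell)\,d_{\ell-1}(\mathcal{M})\,\sigma_K(\mathcal{M})^{\ell-1}$ for an explicit low-degree polynomial $p$. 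Pairing this with the lower bound $\prod_{i=1}^{\ell}g_{K+i}\ge\gamma^{\ell}\prod_{i=0}^{\ell-1}\sigma_{K+i}(\mathcal{M})$ yields a ``geometric-mean'' recursion relating the $\sigma$'s over a block to $\gamma^{-1}d_{\ell-1}(\mathcal{M})^{1/\ell}$ times an earlier $\sigma$; after a convenient choice of block length this takes a form such as $\sigma_n(\mathcal{M})^2\lesssim\gamma^{-2}\,\sigma_{n-\ell}(\mathcal{M})\,d_\ell(\mathcal{M})$.

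I expect the bootstrap to be the main obstacle, as it is in the original proof: one pass of the lemma only gives something like $\sigma_{2n}(\mathcal{M})\lesssim d_n(\mathcal{M})^{1/n}$, far short of the claim. To recover the sharp rate I would substitute the hypothesis $d_n(\mathcal{M})\le M(\max\{1,n\})^{-s}$ and run an induction over dyadically growing indices ($n\mapsto 2n$), choosing the block length so that the recursion for the candidate bound $\sigma_n(\mathcal{M})\le M_s\gamma^{-2}(\max\{1,n\})^{-s}$ closes. Making that induction close --- getting the base cases right, the ceiling-function choices of block length, and the constant $M_s=2^{4s+1}M$ through the recursion --- is the delicate bookkeeping that the whole argument hinges on.
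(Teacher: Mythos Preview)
The paper does not supply its own proof of this theorem: it is quoted as Theorem~3.1 of \cite{Binev2011Convergence} and cited without argument. Your proposal reconstructs precisely the Binev--DeVore proof that the paper defers to --- the triangular coordinate matrix from Gram--Schmidt, the volume/determinant inequality obtained by splitting each $u_{K+i}$ against a near-optimal $(\ell-1)$-dimensional subspace, and the dyadic bootstrap to propagate the polynomial rate --- so you are on exactly the intended track, and the outline is architecturally correct.
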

To achieve a target accuracy $\varepsilon$, with Theorem \ref{Them 1}, an estimate on the number of greedy steps $n(\varepsilon)$ was provided in \cite[Corollary 2.4]{Cohen2020Reduced}, as follows
\begin{corollary}
For $M>0$ and $s>0$, suppose that $d_n(\mathcal{M})\le M(\max\{1,n\})^{-s}$, $n\ge 0$. We have $n(\varepsilon)\le M_1\varepsilon^{-1/s}$, $\varepsilon>0$, where $M_1$ depends on $M$, $s$ and greedy parameter $\gamma$.
\end{corollary}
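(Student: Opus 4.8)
The plan is to obtain the bound directly from Theorem~\ref{Them 1} together with the natural definition of the quantity being estimated, namely
\[
n(\varepsilon) \coloneqq \min\{n \ge 0 : \sigma_n(\mathcal{M}) \le \varepsilon\},
\]
the least number of weak-greedy steps after which accuracy $\varepsilon$ is guaranteed. Theorem~\ref{Them 1} supplies the a priori estimate $\sigma_n(\mathcal{M}) \le M_s\gamma^{-2}(\max\{1,n\})^{-s}$ with $M_s = 2^{4s+1}M$, so it suffices to exhibit, for each $\varepsilon>0$, some $n$ with $M_s\gamma^{-2}(\max\{1,n\})^{-s}\le\varepsilon$; any such $n$ is then an upper bound for $n(\varepsilon)$, and it only remains to make that $n$ explicit and of the claimed size.

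First I would dispose of the trivial regime $\varepsilon \ge M_s\gamma^{-2}$: there $n=0$ already meets the inequality (recall $\max\{1,0\}=1$), so $n(\varepsilon)=0$ and the asserted bound $n(\varepsilon)\le M_1\varepsilon^{-1/s}$ holds for any positive $M_1$. In the complementary regime $\varepsilon < M_s\gamma^{-2}$, I would simply invert the algebraic inequality: $M_s\gamma^{-2}n^{-s}\le\varepsilon$ is equivalent to $n \ge (M_s\gamma^{-2}/\varepsilon)^{1/s}$, and since $(M_s\gamma^{-2}/\varepsilon)^{1/s}>1$ here, the choice $n = \lceil (M_s\gamma^{-2}/\varepsilon)^{1/s}\rceil$ is admissible and yields
\[
n(\varepsilon) \le (M_s\gamma^{-2}/\varepsilon)^{1/s} + 1 \le 2\,(M_s\gamma^{-2})^{1/s}\,\varepsilon^{-1/s}.
\]
Taking $M_1 \coloneqq 2\,(M_s\gamma^{-2})^{1/s} = 2\,(2^{4s+1}M\gamma^{-2})^{1/s}$, which depends only on $M$, $s$, and $\gamma$, then covers both regimes at once.

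The computation is elementary, so I do not expect a genuine obstacle; the two points that need a little care are (i) fixing the precise meaning of $n(\varepsilon)$ so that the estimate of Theorem~\ref{Them 1} is used as a \emph{sufficient} condition for the greedy error to drop below $\varepsilon$, rather than merely as a bound, and (ii) absorbing the ceiling (the ``$+1$'') into the multiplicative constant uniformly in $\varepsilon$, which is exactly why it is convenient to peel off the large-$\varepsilon$ case first. If instead one had exponential decay of $d_n(\mathcal{M})$, the same scheme would apply, with $\varepsilon^{-1/s}$ replaced by a logarithmic factor coming from inverting an exponential rather than a power.
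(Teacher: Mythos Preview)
Your argument is correct. The paper itself does not supply a proof of this corollary but simply quotes it from \cite[Corollary 2.4]{Cohen2020Reduced}; your direct inversion of the bound in Theorem~\ref{Them 1}, together with the split into the trivial regime $\varepsilon\ge M_s\gamma^{-2}$ to absorb the ceiling, is exactly the expected elementary derivation and yields an explicit admissible constant $M_1=2(2^{4s+1}M\gamma^{-2})^{1/s}$.
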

In the practical calculation, the greedy algorithm is applied based on a discrete training parameter set $\widetilde{\mathcal{D}}$ instead of the continuous parameter set $\mathcal{D}$. In \cite{Cohen2020Reduced}, the authors also estimated the size of $\widetilde{\mathcal{D}}$ under the assumption 
\[
\Vert u_h(\bmu_1)-u_h(\bmu_2)\Vert\le M\Vert\bmu_1-\bmu_2\Vert,\ \bmu_1,\bmu_2\in \mathcal{D},\ M>0.
\]
They showed that the discrete manifold $\widetilde{\mathcal{M}} = \{u(\bmu):\bmu\in \widetilde{\mathcal{D}}\}$ should be a $\varepsilon$-net\footnote{If $\widetilde{\mathcal{M}}$ satisfies 
$\mathcal{M}\subset\bigcup_{u(\bmu)\in \widetilde{\mathcal{M}}} B(u(\bmu),\delta)$,
$\widetilde{\mathcal{M}}$ is called a $\delta$-net of $\mathcal{M}$.} of the manifold $\mathcal{M}$ for the target accuracy $\varepsilon$. Such $\widetilde{\mathcal{M}}$ could be induced by a $M^{-1}\varepsilon$-net of $\mathcal{D}$ that scales like $\#\widetilde{\mathcal{D}}\sim \varepsilon^{-cp}$. The size $\#\widetilde{\mathcal{D}}$ in conjunction with the number of greedy steps $n(\varepsilon)$ shows that the total number of error estimator evaluations is at best of the order $\mathcal{O}(\varepsilon^{-cp/s})$.

The offline cost mainly includes two parts, the cost of computing $n(\varepsilon)$ FOM solutions each of dimension $\N$ and the cost of computing the error estimators based on the RB space of dimension $N=n(\varepsilon)$. Thus the total offline cost scales like
\[
{\rm Poly}(\N)n(\varepsilon)+{\rm Poly}(N)\mathcal{O}(\varepsilon^{-cp/s}).
\]
Given an exponential accuracy $\varepsilon = 10^{-a}$, we can find that the offline cost increases exponentially as the accuracy parameter $a$ and the parameter dimension $p$ increase and the parameter $s$ that indicates the decay rate of the Kolmogorov width decreases.
 
\end{appendix}
\end{document}